\documentclass{LMCS}

\def\dOi{9(3:27)2013}
\lmcsheading%
{\dOi}
{1--14}
{}
{}
{Dec.~11, 2012}
{Sep.~25, 2013}
{}

\usepackage{
	amsmath, 
	amssymb, 
	amsthm, 
	color,
	graphicx,
	hyperref,
	stmaryrd 
}
\usepackage[backrefs]{amsrefs}
\usepackage[all]{xy} 

\newcommand{\addtheorem}[2]{
	\newtheorem{#1}[thm]{#2}
}
\addtheorem{con}{Conjecture}
\addtheorem{df}{Definition}
\addtheorem{pro}{Proposition}
\addtheorem{que}{Question}

\newcommand{\abs}[1]{\left\lvert#1\right\rvert}
\newcommand{\ch}{\chi}
\newcommand{\eps}{\varepsilon}
\newcommand{\la}{\langle}
\newcommand{\ra}{\rangle}
\newcommand{\restrict}{\upharpoonright}

\newcommand{\mc}{\mathcal}

\newcommand{\mf}{\mathfrak}
\newcommand{\nil}{\varnothing}
\newcommand{\of}[1]{\llbracket#1\rrbracket}

\newcommand{\E}{\mathbb{E}}
\newcommand{\Iff}{\Longleftrightarrow}

\renewcommand{\to}{\rightarrow}

\renewcommand{\P}{\mathbb{P}}

\DeclareMathOperator{\card}{card}

\DeclareMathOperator{\MLR}{MLR}
\DeclareMathOperator{\DNC}{DNC}

\DeclareMathOperator{\SD}{SD}		
		
\DeclareMathOperator{\IM}{IM}
\DeclareMathOperator{\CIM}{CIM}

\DeclareMathOperator{\W3R}{W3R}
\DeclareMathOperator{\MWC}{MWC}
\DeclareMathOperator{\SBI}{SBI}

\DeclareMathOperator*{\argmin}{arg\, min}
\DeclareMathOperator*{\limstar}{\hbox{$\lim^*$\!}}

\begin{document}
	\title[Asymptotic Hamming distance]{
		Randomness extraction and asymptotic\\Hamming distance
	}
	\author[C.~Freer]{Cameron E. Freer\rsuper a}
	\address{{\lsuper a}Computer Science and Artificial Intelligence Laboratory, Massachusetts Institute of Technology}
	\email{freer@math.mit.edu}
	\thanks{{\lsuper a}Work on this publication was made possible through the support of a grant from the John Templeton Foundation. The opinions expressed in this publication are those of the authors and do not necessarily reflect the views of the John Templeton Foundation.}
	\author[B.~Kjos-Hanssen]{Bj\o rn Kjos-Hanssen\rsuper b}
	\address{{\lsuper b}Department of Mathematics, University of Hawai\textquoteleft i at M\=anoa}
	\email{bjoernkh@hawaii.edu}
	\thanks{{\lsuper b}This material is based upon work supported by the National Science Foundation under Grant No.\ 0901020.}
	
	\keywords{randomness extraction, algorithmic randomness, Hamming distance}
	\amsclass{03D30, 03D32, 68Q30}
	
        \ACMCCS{[{\bf Theory of computation}]: Models of computation---Computability\,/\,Probabilistic computation; Computational complexity and cryptography---Problems, reductions and completeness;  Randomness, geometry and discrete structures---Pseudorandomness and derandomization}

	\begin{abstract}
		We obtain a non-implication result in the Medvedev degrees by studying sequences that are close to Martin-L\"of random in asymptotic Hamming distance. Our result is that the class of stochastically bi-immune sets is not Medvedev reducible to the class of sets having complex packing dimension 1.
	\end{abstract}

	\maketitle
	\section{Introduction}

	We are interested in the extent to which an infinite binary sequence $X$, or equivalently a set $X\subseteq\omega$,
	that is algorithmically random (Martin-L\"of random) remains useful as a randomness source after modifying some of the bits.
	Usefulness here means that some algorithm (extractor) can produce a Martin-L\"of random sequence from the result $Y$ of modifying $X$. For further motivation see Subsection \ref{subsec:relation} and Section \ref{sec:the-way}.

	A set that lies within a small Hamming distance of a random set may be viewed as 
	produced by an adaptive adversary corrupting or fixing some bits after looking at the original random set. 
	Similar problems in the finite setting have been studied going back to Ben-Or and Linial \cite{Ben-Or.Linial:89}. 

		If $A$ is a finite set and $\sigma,\tau\in \{0,1\}^{A}$, then the Hamming distance $d(\sigma,\tau)$ is given by 
		\[
			d(\sigma,\tau) = \abs{\{n:\sigma(n)\ne\tau(n)\}}.
		\]

		Let the collection of all infinite computable subsets of $\omega$ be denoted by $\mf C$. Let $p:\omega\to\omega$. 
		For $X, Y\in 2^{\omega}$ and $N\subseteq\omega$ we define a notion of proximity, or similarity, by
		\[
			X\sim_{p,N}Y\quad\Iff\quad (\exists n_{0})(\forall n\in N,\, n\ge n_{0})( d(X\restrict n,Y\restrict n)\le p(n)).
		\]

	We will study the effective dimension of sequences that are $\sim_{p, N}$
	to certain algorithmically random reals for suitably slow-growing functions
	$p$.

		We use the following notation for a kind of neighborhood around $X$.
		\[
			[X]_{p,N} = \{Y : Y\sim_{p,N} X\}.
		\]
		Moreover, for a collection $\mc A$ of subsets of $\omega$,
		\[
			[\mc A]_{p,N} = \bigcup \left\{ [X]_{p,N} : X \in \mc A\right \}.
		\]

	\paragraph{Turing functionals as random variables.}
		Since a random variable must be defined for all elements of the sample space,
		we consider a Turing functional $\Phi$ to be a map into
		\[
			\Omega: = 2^{<\omega}\cup 2^\omega.
		\]
		Setting the domain of $\Phi$ to also be $\Omega$ allows for composing maps. 
		Let
		\[
			\Lambda^X(n) = X(n).
		\]
		Thus $\Lambda:2^\omega\to 2^\omega$ is the identity Turing functional. 

		We define a probability measure $\lambda$ on $\Omega$ called \emph{Lebesgue (fair-coin) measure},
		whose $\sigma$-algebra of $\lambda$-measurable sets is
		\[
			\mf F=\{\mc S\subseteq\Omega: \mc S\cap 2^\omega\text{ is Lebesgue measurable}\},
		\]
		by letting $\lambda(\mc S)$ equal the fair-coin measure of $\mc S\cap 2^\omega$. Thus $\lambda(2^\omega) = 1$ and $\lambda(2^{<\omega}) = 0$, that is, 
		$\lambda$ is concentrated on the functions that are actually total. 

		The \emph{distribution} of $\Phi$ is the measure $\mc S\mapsto \lambda\{X:\Phi^X\in\mc S\}$,
		defined on $\mf F$.
		Thus the distribution of $\Lambda$ is $\lambda$.

			If $X\in 2^{\omega}$ then $X$ is called a real, a set, or a sequence depending on context.
			If $I\subseteq \omega$ then $X\restrict I$ denotes $X$, viewed as a function, restricted to the set $I$.
			We denote the cardinality of a finite set $A$ by $\abs{A}$.
			Regarding $X,Y$ as subsets of $\omega$ and letting $ + $ denote sum mod two, note that 
			$(X + Y)\cap n = \{k<n: X(k)\ne Y(k)\}$ and generally for a set $I\subseteq\omega$, 
			$(X + Y)\cap I = \{k\in I: X(k)\ne Y(k)\}$. 

			For an introduction to algorithmic randomness the reader may consult the recent books by Nies \cite{NiesBook} and Downey and Hirschfeldt \cite{MR2732288}. 
			Let $\MLR$ denote the set of Martin-L\"of random elements of $2^{\omega}$. 
			For a binary relation $R$ we use a set-theoretic notation for image,
			\[
				R\of{A} = \{y: (\exists x\in A)(\la x,y\ra\in R)\}.
			\]

			Let the \emph{use} $\varphi^{X}(n)$ be the largest number used in the computation of $\Phi^{X}(n)$. 
			We write 
			\[
				\Phi^{X}(n)\downarrow @s
			\]
			if $\Phi^{X}(n)$ halts by stage $s$, with use at most $s$; if this statement is false, we write $\Phi^{X}(n)\uparrow@s$. 
			We may assume that the running time of a Turing reduction is the same as the use, because 
			any $X$-computable upper bound on the use is a reasonable notion of use.

			For a set $\mathcal A\subseteq 2^\omega$, let 
			\[
				\text{Interior}_{p,N}(\mc A) = \{X: (\forall Y\sim_{p,N} X)(Y\in\mc A)\}
			\]
			\[
				\subseteq\text{Interior}_{*}(\mc A) = \{X: (\forall Y=^*X)(Y\in\mc A)\}\subseteq\mc A
			\]
			where $=^*$ denotes almost equality for all but finitely many inputs. It is easy to see that 
			\[
				\text{Interior}_{p,N}(\MLR)=\nil
			\]
			whenever $N\subseteq\omega$ and $p$ is unbounded.
			\begin{df}[Effective convergence]
				Let $\{a_{n}\}_{n\in\omega}$ be a sequence of real numbers.
				\begin{itemize}
					\item 
						\emph{$\{a_{n}\}_{n\in\omega}$ converges to $\infty$ effectively} if 
						there is a computable function $N$ such that for all $k$ and all $n\ge N(k)$, $a_n\ge k$. 
					\item 
						\emph{$\{a_{n}\}_{n\in\omega}$ converges to $0$ effectively} if 
						the sequence $\{a^{-1}_{n}\}_{n\in\omega}$ converges to $\infty$ effectively.
				\end{itemize}
			\end{df}

			\begin{df}
				For a sequence of real numbers $\{a_{n}\}_{n\in\omega}$, $\limstar_{n\to\infty}a_{n}$ is the real number to which $a_{n}$ converges effectively, if any; and is undefined if no such number exists.
			\end{df}
			As a kind of effective big-O notation, $p_n = \omega^{*}(q_n)$ means 
				$\limstar_{n\to\infty}q_n/p_n = 0$, i.e., $q_n/p_n$ goes to zero effectively.

			\paragraph{Central Limit Theorem.} Let $\mc N$ be the cumulative distribution function for a standard normal random variable; so
			\[
				\mc N(x) = \frac{1}{\sqrt{2\pi}}\int_{-\infty}^x e^{-t^2/2}\,dt.
			\]
			Let $\P$ denote fair-coin probability on $\Omega$. We may write 
			\[
				\P(\text{Event}) = \P(\{X: X\in\text{ Event}\}) = \lambda\{X: X\in\text{ Event}\}.
			\]

			We will make use of the following quantitative version of the central limit theorem.

			\begin{thm}[Berry-Ess\'een\footnote{See for example Durrett \cite{Durrett}.}]\label{Berry-Esseen}
				Let $\{X_n\}_{n\ge 1}$ be independent and identically distributed real-valued random variables with the expectations 
				$\E(X_n) = 0$, $\E(X_n^2) = \sigma^2$, and $\E(|X_n|^3) = \rho<\infty$. Then 
				there is a constant $d$ (with $.41\le d\le .71$) such that for all $x$ and $n$, 
				\[
					\left|\P\left(\frac{\sum_{i = 1}^n X_i}{\sigma\sqrt{n}}\le x\right) - \mc N(x)\right|\le \frac{d\rho}{\sigma^3\sqrt{n}}.
				\]
			\end{thm}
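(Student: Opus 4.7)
The plan is to prove Berry-Ess\'een by the classical characteristic function method. Let $\varphi(t) = \E(e^{itX_1})$ be the common characteristic function of the $X_n$, set $S_n = \sum_{i=1}^n X_i$, and write $F_n$ for the distribution function of the normalized sum $Z_n = S_n/(\sigma\sqrt n)$; its characteristic function is $\varphi_n(t) = \varphi(t/(\sigma\sqrt n))^n$. The object to control is $\sup_x |F_n(x) - \mc N(x)|$.

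The first step is Ess\'een's smoothing inequality: for any distribution function $F$ with characteristic function $\hat F$ and any $T>0$,
\[
	\sup_x |F(x) - \mc N(x)| \le \frac{1}{\pi}\int_{-T}^{T}\left|\frac{\hat F(t) - e^{-t^2/2}}{t}\right|dt + \frac{c}{T}
\]
for an absolute constant $c$. This is obtained by convolving $F - \mc N$ with a kernel whose Fourier transform is compactly supported in $[-T,T]$, applying Fourier inversion, and bounding the smoothing error using the uniform Lipschitz estimate $\mc N'(x) \le 1/\sqrt{2\pi}$.

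Next, the hypotheses $\E(X_1) = 0$, $\E(X_1^2) = \sigma^2$, $\E(|X_1|^3) = \rho$ give the Taylor expansion
\[
	\varphi(s) = 1 - \frac{\sigma^2 s^2}{2} + R(s), \qquad |R(s)| \le \frac{\rho|s|^3}{6}.
\]
Substituting $s = t/(\sigma\sqrt n)$, taking $n$-th powers, and using the elementary bound $|a^n - b^n| \le n|a-b|\max(|a|,|b|)^{n-1}$ together with $|\varphi(s)| \le e^{-\sigma^2 s^2/4}$ for small $|s|$ yields, in a suitable range $|t| \le c_0 \sigma^3\sqrt n/\rho$, an estimate of the form
\[
	\left|\frac{\varphi_n(t) - e^{-t^2/2}}{t}\right| \le \frac{C\rho t^2}{\sigma^3\sqrt n}e^{-t^2/4}.
\]
Choosing $T$ proportional to $\sigma^3\sqrt n/\rho$, the integral in the smoothing inequality integrates to $O(\rho/(\sigma^3\sqrt n))$, and the $c/T$ term contributes the same order, giving the stated bound with some explicit $d$.

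The main obstacle is not the qualitative estimate but the \emph{sharp} constant $d$, which the statement pins down to $0.41 \le d \le 0.71$. Getting into this range forces a carefully optimized smoothing kernel and a rather delicate bookkeeping of the Taylor remainders for both small and large $|t|$, together with a separate argument for the regime $|t|$ near the cutoff $T$ where the multiplicative bound $|\varphi(s)|\le e^{-\sigma^2 s^2/4}$ fails. Since the authors only need the qualitative $O(\rho/(\sigma^3\sqrt n))$ rate, I would not try to optimize the constant and would simply cite Durrett for the sharp form.
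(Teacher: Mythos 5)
The paper itself does not prove this theorem: it is quoted as a known result, with the footnote pointing to Durrett \cite{Durrett}, and is then applied in the proof of Lemma \ref{core}. Your characteristic-function outline --- Ess\'een's smoothing inequality, the third-moment Taylor expansion of $\varphi$, and a cutoff at $T\asymp\sigma^3\sqrt n/\rho$ --- is a correct sketch of the standard proof, and your closing decision to cite Durrett rather than chase the constant is exactly what the authors do. One small caveat: the quoted range $0.41\le d\le 0.71$ is not something the textbook smoothing argument you describe produces (it typically gives a constant on the order of $0.8$ to $3$); the lower bound is Ess\'een's extremal example for the best possible constant, and the upper bound comes from specialized optimizations of the kernel and error terms, so even a careful write-up of your sketch would still have to defer to the refined literature for those particular numbers. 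For the purposes of this paper none of that matters, since Lemma \ref{core} only uses the qualitative $O(\rho/(\sigma^3\sqrt n))$ rate, and your assessment of that is accurate.
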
 
			We are mostly interested in the case $X_{n} = X(n)-\frac12$, $X(n)\in\{0,1\}$, 
			for $X\in 2^{\omega}$ under $\lambda$, in which case $\sigma = 1/2$. 

	\subsection{New Medvedev degrees}
	Let $\le_{s}$ denote Medvedev (strong) reducibility and let $\le_{w}$ denote Muchnik (weak) reducibility. 
	A recent survey of the theory behind these reducibilities is Hinman \cite{Hinman}.

		\begin{df}[see, e.g., {\cite{JL}}]
	\label{BIdef}
			A set $X$ is \emph{immune} if for each $N\in\mf C$, $N\not\subseteq X$. 
			If $\omega\setminus X$ is immune then $X$ is \emph{co-immune}. 
			If $X$ is both immune and co-immune then $X$ is \emph{bi-immune}.
		\end{df}

		\begin{df}\label{SBIdef}
			A set $X$ is \emph{stochastically bi-immune} if for each set $N\in\mf C$, 
			$X\restrict N$ satisfies the strong law of large numbers, i.e.,
			\[
				\lim_{n\to\infty}\frac{\abs{X\cap N\cap n}}{\abs{N\cap n}} = \frac12.
			\]
		\end{df}

		\begin{df}\label{SDdef}
			Let $0\le\mf p<1$. A sequence $X\in 2^{\omega}$ is \emph{$\mf p$-stochastically dominated} if 
			for each $L\in\mf C$,
			\[
				\limsup_{n\to\infty}\frac{\abs{L\cap n}}{n}>0
				\quad\Longrightarrow\quad
				(\exists M\in\mf C)\quad M\subseteq L\quad\text{and}\quad
				\limsup_{n\to\infty} \frac{\abs{X\cap M\cap n}}{\abs{M\cap n}}\le\mf p.
			\]
			The class of stochastically dominated sequences is denoted $\SD = \SD_{\mf p}$. 
			If $\omega\setminus X\in \SD_{\mf p}$ then we write $X\in\SD^{\mf p}$ and say that 
			$X$ is stochastically dominating.
		\end{df}

		Let $\IM$ denote the set of immune sets, $\CIM$ the set of co-immune sets, and $\W3R$ the set of weakly 3-random sets. Let $K$ denote prefix-free Kolmogorov complexity.

	\begin{df}[see, e.g., {\cite[Ch.~13]{MR2732288}}]
	\label{dimdef}
	The \emph{effective Hausdorff dimension} of $A\in 2^{\omega}$ is
	\[
		\dim_H(A) =
			\liminf_{n\in\omega}
			\frac{K(A\restrict n)}{n}.
	\]
	The \emph{complex packing dimension} of $A\in 2^{\omega}$ is 
	\[
		\dim_{cp}(A) =
			\sup_{N\in\mf C}
				\inf_{n\in N}
					\frac{K(A\restrict n)}{n}.
	\]
	The \emph{effective packing dimension} of $A\in 2^{\omega}$ is
	\[
		\dim_p(A) = 
			\limsup_{n\in\omega}
			\frac{K(A\restrict n)}{n}.
	\]
	\end{df}

	\begin{pro}
	For all $A\in 2^{\omega}$,
	\[
		0 \le \dim_{H}(A) \le \dim_{cp}(A) \le \dim_{p}(A) \le 1.
	\]
	\end{pro}
	\begin{proof}
	The inequality $\dim_{H}(A)\le\dim_{cp}(A)$ uses the fact that each cofinite set $N\subseteq\omega$ is in $\mf C$. 
	The inequality $\dim_{cp}(A)\le\dim_{p}(A)$ uses the fact that 
	each $N\in\mf C$ is an infinite subset of $\omega$.
	\end{proof}

	By examining the complex packing dimension of reals that are $\sim_{p, N}$
	to a Martin-L\"of random real for $p$ growing more slowly than $n/(\log
	n)$, we will derive our main result, which states the existence, for each
	Turing reduction $\Phi$, of a set $Y$ of complex packing dimension $1$ for
	which $\Phi^Y$ is not stochastically bi-immune.

	\subsection{Relation of our results to other recent results.}\label{subsec:relation}
	Jockusch and Lewis \cite{JL} prove that the class of bi-immune sets is
	Medvedev reducible to the class of \emph{almost diagonally non-computable functions} DNC$^*$, i.e., functions $f$ such that $f(x)=\varphi_x(x)$ for at most finitely many $x$. Downey, Greenberg, Jockusch, and Milans \cite{MR2835294} show that DNC$_3$ (the class of DNC functions taking values in $\{0,1,2\}$) and hence also its superset DNC$^*$, is not Medvedev above the class of Kurtz random sets. We do not know whether the class of stochastically bi-immune sets is Medvedev reducible to the class of DNC$^*$ functions. We show in Theorem \ref{notsur} below that from a set of complex packing dimension 1 one cannot uniformly compute a stochastically bi-immune set; on the other hand, to compute a DNC$^*$ function from a set of complex packing dimension 1 one would apparently also need to know the witnessing set $N\in\mf C$.

	\begin{df}[see, e.g., {\cite[Def.~7.6.4]{NiesBook}}]
		\label{MWCdef}
	A sequence $X\in 2^{\omega}$ is Mises-Wald-Church $(\MWC)$ stochastic if no partial computable monotonic selection rule can select a biased subsequence of $X$, i.e., a subsequence where the relative frequencies of 0s and 1s do not converge to $1/2$. 
	\end{df}

	\begin{df}
	\label{BI2def}
	A sequence $X\in 2^{\omega}$ is $\mathrm{BI}^{\,2}$ (bi-immune for sets of size two) if there is no computable collection of disjoint finite sets of size 2 on which the set omits a certain pattern such as $01$.
	More precisely, $X$ is $\mathrm{BI}^{\,2}$ if for each computable disjoint collection $\{T_n:n\in\omega\}$ where each $T_n$ has cardinality two, say $T_n=\{s_n,t_n\}$ where $s_n<t_n$, and each $P\subseteq \{0,1\}$, there is an $n$ such $X(s_n)=P(0)$ and $X(t_n)=P(1)$.
	\end{df}

	Each von Mises-Wald-Church stochastic (MWC-stochastic) set is stochastically bi-immune. Our main theorem implies that a set of complex packing dimension 1 does not necessarily uniformly compute a MWC-stochastic set. This consequence is not really new with the present paper, however, because the fact that $\DNC_{3}$ is not Medvedev above BI$^2$ is implicit in Downey, Greenberg, Jockusch, and Milans \cite{MR2835294} as pointed out to us by Joe Miller. The situation is diagrammatically illustrated in Figure \ref{crazy}, with notation defined in Figures \ref{explain} and \ref{sane}. In the future we could hope to replace complex packing dimension by effective Hausdorff dimension in Theorem \ref{notsur}.

	\begin{figure}
		\[
			\xymatrix{
				&
				&
				&
				&
				\DNC_{2}
				\ar[dd]
				\ar@{=>}[dl] &
				& 
				\\
				&
				&
				&
				*+[F]{\text{MLR}}\ar[d]\ar[dl] &
				&
				&
				\\
				&
				&
				*+[F]{\text{KR}}\ar[d] &
				*+[F]{\text{MWC}}\ar[d]\ar[dl] &
				\DNC_{3}\ar@{=>}[d]\ar@{~>}[dll]&
				&
				\\
				&
				&
				*+[F]{\text{BI}^{2}}\ar[d]	&
				*+[F]{\text{SBI}}\ar[dl] &
				*+[F]{(H,1)}\ar[d]&
				&
				\\
				&
				&
				*+[F]{\text{BI}} &
				&
				*+[F]{(cp,1)}\ar@{~>}[ul]&
				&
				\\
			}		 
		\]
		\caption{
			Some Medvedev degrees.
			The fact that $(cp,1)$ is not Medvedev above SBI is Theorem \ref{notsur}.
		}
		\label{crazy}
	\end{figure}

	\begin{figure}
		\[
			\xymatrix{
				\ar[r] & \text{Included in} \\
				\ar@{~>}[r] & \text{Not Medvedev above} \\
				\ar@{=>}[r] & \text{Medvedev above} \\
			}
		\]
		\caption{Meaning of arrows.}
		\label{explain}
	\end{figure}

	\begin{figure}
		\begin{tabular}{|c|c|c|}
			\hline
			Abbreviation &
			Unabbreviation &
			Definition \\
			\hline
			$\DNC_{n}$ &
			Diagonally non-computable function in $n^{\omega}$ &
			\\
			\hline
			$\MLR$ &
			Martin-L\"of random	&
			\\
			KR &
			Kurtz random (weakly 1-random) &
			\\
			\hline
			$\MWC$ &
			Mises-Wald-Church stochastic &
			\ref{MWCdef} \\
			SBI	&
			Stochastically bi-immune &
			\ref{SBIdef} \\
			BI &
			bi-immune &
			\ref{BIdef} \\
			BI$^2$ &
			bi-immune for sets of size two &
			\ref{BI2def} \\
			\hline
			$(H,1)$	&
			effective Hausdorff dimension 1	&
			\ref{dimdef} \\
			$(cp,1)$ &
			complex packing dimension 1	&
			\ref{dimdef} \\
			\hline
		\end{tabular}
		\caption{Abbreviations used in Figure \ref{crazy}.}
		\label{sane}
	\end{figure}

	\section{Hamming space}

		The Hamming distance between a point and a set of points is defined by $d(y,A): = \min_{a\in A}d(y,a)$.
		The $r$-neighborhood of a set $A\subseteq\{0,1\}^{n}$ is 
		\[
			\Gamma_{r}(A) = \{y\in\{0,1\}^{n}:d(y,A)\le r\}.
		\]
		In particular,
		\[
			\Gamma_{r}(\{c\}) = \{y\in\{0,1\}^{n}: d(y,c)\le r\},
		\]
		and
		\[
			\Gamma_{r}(A) = \bigcup_{a\in A}\Gamma_{r}(\{a\}).
		\]
		A \emph{Hamming-sphere}\footnote{A Hamming-sphere is more like a ball than a sphere, but the terminology is entrenched.} 
		with center $c\in\{0,1\}^{n}$ is a set $S\subseteq\{0,1\}^{n}$ such that for some $k$,
		\[
			\Gamma_{k}(\{c\})\subseteq S\subseteq\Gamma_{k + 1}(\{c\}).
		\]

		\begin{thm}[Harper \cite{Harper}; see also Frankl and F\"uredi \cite{FF}]\label{HarperThm}
		For each $n,r\ge 1$ and each set $A\subseteq\{0,1\}^{n}$, 
		there is a Hamming-sphere $S\subseteq\{0,1\}^{n}$ such that
		\[
			\abs{A} = \abs{S},\quad\text{and}\quad\abs{\Gamma_{r}(A)} \ge \abs{\Gamma_{r}(S)}.
		\]
		\end{thm}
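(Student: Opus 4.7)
The plan is to prove the theorem by \emph{compression}. For each coordinate $i\in\{1,\ldots,n\}$, define the $i$-compression $C_i(A)$ of $A\subseteq\{0,1\}^n$ by partitioning $\{0,1\}^n$ into the pairs $\{x,x\oplus e_i\}$ and, in each pair containing exactly one element of $A$, replacing that element by the partner whose $i$-th coordinate is $0$; pairs with zero or two elements of $A$ are left unchanged. By construction $|C_i(A)|=|A|$.

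The first lemma to establish is that $|\Gamma_r(C_i(A))|\le |\Gamma_r(A)|$. This is verified pair-by-pair along direction $i$: in each $i$-edge $\{y,y\oplus e_i\}$ one shows that the number of points lying in $\Gamma_r(C_i(A))$ is at most the number lying in $\Gamma_r(A)$. The crucial observation is that if $y\in\Gamma_r(C_i(A))$ via a witness $a^*\in C_i(A)$ at Hamming distance $\le r$, then the preimage of $a^*$ under the compression lies within distance $r$ of the appropriate member of $\{y,y\oplus e_i\}$ in $A$. Iterating the operators $C_1,\ldots,C_n$ in round-robin, the nonnegative integer quantity $\sum_{a\in A}\mathrm{wt}(a)$, where $\mathrm{wt}$ denotes Hamming weight, is monotonically nonincreasing and strictly decreases whenever some $C_i$ acts nontrivially. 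Thus the process terminates in a set $A^*$ with $|A^*|=|A|$, $|\Gamma_r(A^*)|\le|\Gamma_r(A)|$, and $C_i(A^*)=A^*$ for every $i$; equivalently, $A^*$ is a down-set in the coordinatewise partial order on $\{0,1\}^n$.

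The main obstacle is that a down-set is not automatically a Hamming-sphere in the paper's sense: for instance $\{000,100,010,110\}\subseteq\{0,1\}^3$ is a down-set but is not sandwiched between any two concentric Hamming balls. The remedy is to enlarge the family of compressions (for example by using the Frankl--F\"uredi $UV$-operations indexed by disjoint pairs of subsets of $[n]$ of equal size, or by an induction on $n$) and run them until the extremal set becomes the length-$|A|$ initial segment $S$ of the \emph{simplicial order} on $\{0,1\}^n$ --- the order that sorts first by Hamming weight, then by reverse-lexicographic order. Each of these additional operations is shown, by essentially the same pair-by-pair argument, to preserve cardinality and weakly decrease $|\Gamma_r(\cdot)|$. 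Such an $S$ consists of every string of Hamming weight below some $k$ together with a subset of the weight-$k$ shell, so taking $c=0^n$ one has $\Gamma_{k-1}(\{c\})\subseteq S\subseteq \Gamma_k(\{c\})$, exhibiting $S$ as a Hamming-sphere in the paper's sense and furnishing the required witness.
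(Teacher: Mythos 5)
The paper does not prove this theorem; it is imported by citation (Harper \cite{Harper}, with a short proof in Frankl and F\"uredi \cite{FF}), so there is no internal proof to compare against. What follows is an assessment of your argument on its own terms.

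Your outline follows the standard compression route, and the first phase is sound in spirit: $i$-compressions preserve cardinality, the inclusion $\Gamma_r(C_i(A))\subseteq C_i(\Gamma_r(A))$ (which gives $|\Gamma_r(C_i(A))|\le|\Gamma_r(A)|$) can be checked by a short case analysis on the two elements of each $i$-edge, a set fixed by every $C_i$ is a down-set, and $\sum_{a\in A}\mathrm{wt}(a)$ does strictly decrease under every nontrivial $C_i$, so that phase terminates. Your one-sentence account of the case analysis is only a sketch, but the lemma it gestures at is correct.

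The genuine gap is in the final paragraph, which is where the content of Harper's theorem actually lives. You correctly observe that a down-set need not be a Hamming-sphere and that a further reduction is required, but you dispose of it in a single sentence, asserting that the additional $UV$-operations (or an unspecified induction on $n$) ``preserve cardinality and weakly decrease $|\Gamma_r(\cdot)|$ by essentially the same pair-by-pair argument.'' That assertion is not justified: a $UV$-compression with $|U|=|V|\ge 2$ does not act along a single edge direction, so the two-point computation from the first phase does not transfer, and proving a boundary-monotonicity lemma for whatever family of operations carries a compressed set to an initial segment of the simplicial order is precisely the hard step. Separately, your termination certificate $\sum_{a\in A}\mathrm{wt}(a)$ is invariant under every weight-preserving $UV$-operation, so it cannot show the second phase halts; you would need a different potential, such as the sum of simplicial ranks. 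As written, the proposal correctly identifies the shape of the argument, but both the monotonicity lemma for the second-phase compressions and the descent from a compressed set to a simplicial initial segment are asserted rather than established.
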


		\noindent Following Buhrman et al.\ \cite{Buhrman}, we write
		\[
			b(n,k): = {n\choose 0} + \cdots + {n\choose k}.
		\]
		Note that for all $c\in\{0,1\}^{n}$, $\abs{\Gamma_{k}(\{c\})} = b(n,k)$ 

		If the domain of $\sigma$ is an interval $I$ in $\omega$ rather than an initial segment of $\omega$, 
		we may emphasize $I$ by writing
		\[
			B^{I}_{r}(\sigma) = \Gamma_{r}(\{\sigma\}) = \{ \tau\in\{0,1\}^{I} : d(\sigma,\tau)\le r \}.
		\]
		$\P$ denotes the uniform distribution on $\{0,1\}^{I}$, so by definition 
		\[
			\P(E) = \frac{\abs{E}}{2^{\abs{I}}}.
		\]

		Recall that $D_{m}$ is the $m^{\text{th}}$ canonical finite set.
		The intuitive content of Lemma \ref{core} below is that a medium size set is unlikely to contain a random large ball. (Note that we do not assume the sets $I_{m}$ are disjoint.)

		\begin{lem}\label{core}
			Let $\ch\in\omega^{\omega}$. Suppose 
			\begin{equation}
				\limstar_{n\to\infty}\ch(n)/\sqrt{n} = \infty.
			\end{equation}
			Let $f\in\omega^{\omega}$ be a computable function. 
			Let $I_{m} = D_{f(m)}$ and $n_{m} = \abs{I_{m}}$.
			Suppose
			\begin{equation}
				\limstar_{m\to\infty}n_{m} = \infty.
			\end{equation}

			For each $m\in\omega$ let $E_{m}\subseteq\{0,1\}^{I_{m}}$.
			Suppose $\limsup_{m\to\infty}\P(E_{m})\le \mf p$ where $0<\mf p<1$ is computable.
			Writing 
			\[
				B_{\ch(n)}(X)\quad\text{for}\quad B^{I_{m}}_{\ch(n_{m})}(X\restrict I_{m}),
			\]
			we have
			\begin{equation}\label{blue}
			\limstar_{m\to\infty}\P(\{X:B_{\ch(n)}(X)\subseteq E_{m}\}) = 0.
			\end{equation}
			Moreover, for each $m_{0}\in\omega$ and computable $\mf q\in (\mf p,1)$ 
			there is a modulus of effective convergence in (\ref{blue}) that works 
			for all sets $\{E_{m}\}_{m\in\omega}$ such that for all $m\ge m_{0}$, $\P(E_{m})\le \mf q$.
		\end{lem}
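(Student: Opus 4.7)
The plan is to recast the target event as the complement of a Hamming neighborhood, apply Harper's theorem to reduce to the case of a Hamming ball, and then invoke Berry--Ess\'een (Theorem \ref{Berry-Esseen}) twice.

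First I would observe that, viewing the event as a subset of $\{0,1\}^{I_m}$,
\[
\{X : B_{\ch(n_m)}(X\restrict I_m)\subseteq E_m\} = \{0,1\}^{I_m} \setminus \Gamma_{\ch(n_m)}(E_m^c),
\]
since $B_r(x)\subseteq E_m$ iff $x$ is at Hamming distance $>r$ from $E_m^c$. Thus \eqref{blue} is equivalent to showing $\P(\Gamma_{\ch(n_m)}(E_m^c)) \to 1$ effectively in $m$. For $m\ge m_0$ we have $\P(E_m^c) \ge 1-\mf q$.

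Next, for each such $m$ I would apply Harper's Theorem \ref{HarperThm} to $A=E_m^c$: there is a Hamming-sphere $S\subseteq \{0,1\}^{I_m}$ with $|S|=|A|$ and $|\Gamma_r(A)|\ge |\Gamma_r(S)|$, where $r=\ch(n_m)$. Writing $\Gamma_k(\{c\})\subseteq S\subseteq \Gamma_{k+1}(\{c\})$ for the center $c$ and radius $k$ of $S$, one has $\Gamma_r(S)\supseteq \Gamma_{k+r}(\{c\})$ and $|S|\le |\Gamma_{k+1}(\{c\})|$. So it suffices to bound $\P(\Gamma_{k+r}(\{c\})) = \P(d(X\restrict I_m,c)\le k+r)$ from below, where $k$ is controlled by $\P(d(X\restrict I_m,c)\le k+1)\ge 1-\mf q$.

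Now I would use Berry--Ess\'een with $\sigma=1/2$. Since $d(X\restrict I_m,c)$ is a sum of $n_m$ i.i.d.\ Bernoulli$(1/2)$ random variables, the lower bound on the measure of the ball of radius $k+1$ translates, up to an error $O(1/\sqrt{n_m})$, into
\[
\frac{2(k+1-n_m/2)}{\sqrt{n_m}} \ge z',
\]
where $z'=\mc N^{-1}(1-\mf q)$ depends only on $\mf q$ (and the Berry--Ess\'een correction absorbs once $n_m$ is large enough, which is effective by hypothesis $(2)$). Applying Berry--Ess\'een again in the other direction,
\[
\P(d(X\restrict I_m,c)\le k+r) \ge \mc N\!\left(z' + \frac{2(r-1)}{\sqrt{n_m}}\right) - \frac{d\rho}{\sigma^3\sqrt{n_m}},
\]
and the argument of $\mc N$ tends to $\infty$ effectively because $r=\ch(n_m)=\omega^*(\sqrt{n_m})$ by hypothesis $(1)$ combined with $n_m\to\infty$ effectively. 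Hence the right-hand side tends to $1$ effectively.

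Finally, inspecting the argument, the modulus of effective convergence depends only on $\mf q$, the (absolute) Berry--Ess\'een constants, and the given moduli for $\ch(n)/\sqrt{n}\to\infty$ and $n_m\to\infty$ — not on the particular sets $E_m$ — which is exactly the uniformity asserted in the ``moreover'' clause. The main technical step is the use of Harper's theorem to reduce the arbitrary measurable $E_m$ to an almost-Hamming-ball, so that the one-dimensional CLT estimates apply uniformly; once this reduction is in place, the rest is bookkeeping about effective moduli.
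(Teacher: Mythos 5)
Your proposal is correct and takes essentially the same route as the paper: both apply Harper's theorem to $E_m^c$ (the paper writes $\neg E_m$) to replace it by a Hamming sphere, and both then invoke Berry--Ess\'een twice — once to pin the sphere's radius $k$ in terms of $\mf q$ via $\mc N^{-1}$, and once to show the shifted ball $\Gamma_{k+\ch(n_m)}(\{c\})$ has measure tending to $1$ effectively because $\ch(n)/\sqrt n\to^*\infty$. The paper phrases the final step in terms of the complementary sphere $\widehat H\subset\Gamma_{r+1}(\{\overline c\})$ and directly counts $b(n,r+1-\ch(n))/2^n=f_m(b_m)$, whereas you work with $\Gamma_{\ch(n_m)}(E_m^c)\supseteq\Gamma_{k+\ch(n_m)}(\{c\})$ and bound its measure from below; these are the same estimate read in complementary directions, so the argument and the uniformity of the modulus in the ``moreover'' clause coincide.
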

		\proof
			Let $X\in 2^\omega$ be a random variable with $X =_d\Lambda$, and
			\[
				S^{(m)} = \sum_{i\in I_m} X(i).
			\]
			Let 
			\[
				f_{m}(x) = \P\left( \frac{ S^{ (m) }-n/2}{ \sqrt{n}/2 } \le x\right)
			\] 
			We have%
			\footnote{Indeed, let $Y_{i} = X_{i}-\E(X_{i})$ 
			where $\E(X_{i}) = \frac{1}{2}$ is the expected value of $X_{i}$, so $\E(Y_{i}) = 0$. 
			By the Berry-Ess\'een Theorem \ref{Berry-Esseen}, for all $x$ 
			\[
				\left|\P\left(\frac{\sum_{i\in I_{m}} Y_i}{\sigma\sqrt{n}}\le x\right) - \mc N(x)\right|\le \frac{d\rho}{\sigma^3\sqrt{n}} = \frac{d}{\sqrt{n}},
			\]
			where $\rho = 1/8 = \E(\abs{Y_{i}}^{3})$, and 
			$\sigma = 1/2$ is the standard deviation of $X_{i}$ (and $Y_{i}$).}
			\[
				\abs{f_{m}(x)-\mc N(x)}\le \frac{d}{\sqrt{n_{m}}}.
			\]
			Since $\limstar_{m\to\infty} n_{m} = \infty$, $\limstar\frac{d}{\sqrt{n_{m}}} = 0$. So
			\begin{eqnarray}\label{eu}
				\limstar_{m\to\infty}\sup_{x}\abs{f_{m}(x)-\mc N(x)} = 0.
			\end{eqnarray}
			Let $r = r_{m}$ be such that 
			\[
				b(n,r)\le \abs{E_{m}}< b(n,r + 1).
			\]
			Let
			\[
				a_{m} = \frac{r_{m} -\frac{n}{2}}{\sqrt{n}/2},
			\]
			and let
			\[
				b_{m} = a_{m} + \frac{1}{\sqrt{n}/2} - \frac{\ch(n)}{\sqrt{n}/2}.
			\]
			By (\ref{eu}), 
			\begin{eqnarray}\label{dos}
				\limstar_{m\to\infty} |f_{m}(b_{m})-\mc N(b_{m})| = 0.
			\end{eqnarray}
			We have
			\[
				\limsup_{m\to\infty}f_{m}(a_{m}) 
				= \limsup_{m\to\infty} \P\left(\frac{S^{(m)} - n/2}{\sqrt{n}/2} 
				\le \frac{r_{m} -\frac{n}{2}}{\sqrt{n}/2}\right) 
			\]
			\[
				 = \limsup_{m\to\infty} \P(S^{(m)} \le r_{m})
				 = \limsup_{m\to\infty} \frac{b(n,r)}{2^n}
				\le\limsup_{m\to\infty} \P(E_m)
				\le \mf p.
			\]
			Since $f_{m}\to\mc N$ uniformly, it follows that
			\[
				\limsup_{m\to\infty} \mc N(a_{m})\le \mf p,
			\]
			and so as $\mc N$ is strictly increasing, 
			\[
				\limsup_{m\to\infty}a_{m}\le \mc N^{-1}(\mf p)\quad( = 0\text{ if }\mf p = 1/2).
			\]
			Let $m_0$ be such that for all $m\ge m_0$,
			\[
				a_{m} + \frac{1}{\sqrt{n_m}/2}\le \mc N^{-1}(\mf p)+1.
			\]
			Since by assumption $\limstar_{n\to\infty}\ch(n)/\sqrt{n} = \infty$, we have that $b_{m}$ is the sum of 
			a term that goes effectively to $-\infty$, and 
			a term that after $m_0$ never goes above $\mc N^{-1}(\mf p) + 1$ again. Thus 
			\begin{eqnarray*}\label{b}
				\limstar_{m\to\infty} b_{m} = -\infty.
			\end{eqnarray*}
			It is this rate of convergence that is transformed in the rest of the proof. Now 
			\begin{eqnarray*}\label{uno}
				\limstar_{m\to\infty} \mc N(b_{m}) = 0.
			\end{eqnarray*}
			Hence by (\ref{dos}), $\limstar_{m\to\infty} f_{m}(b_{m}) = 0$.

			Let us write
			\[
				B_{t}(X): =  B^{I_{m}}_{t}(X\restrict I_{m}),
			\]
			considering $X\restrict I_{m}$ as a string of length $n$. 
			By Harper's Theorem \ref{HarperThm}, we have a Hamming sphere $H$ with 
			\[
				\abs{H} = \abs{\neg E_{m}}\quad\text{and}\quad
				\abs{\Gamma_{\ch(n)}(\neg E_{m})}
				\ge
				\abs{\Gamma_{\ch(n)}(H)}.
			\]
			Then
			\[
				\P(\{X:X\in\Gamma_{\ch(n)}(\neg E_{m})\}) 
				\ge
				\P(\{X:X\in\Gamma_{\ch(n)}(H)\}).
			\]
			Therefore 
			\[
				\P(\{X: X\not\in\Gamma_{\ch(n)}(\neg E_{m})\})
				\le
				\P(\{X: X\not\in\Gamma_{\ch(n)}H)\}).
			\]
			Let $\widehat H$ be the complement of $H$. 
			If the Hamming sphere $H$ is centered at $c\in\{0,1\}^{n}$ then clearly 
			$\widehat H$ is a Hamming sphere centered at $\overline c$, where $\overline c(k) = 1-c(k)$.
			Since
			\[
				\abs{\widehat H} = \abs{E_{m}} < b(n,r + 1),
			\] 
			we have $\widehat H\subset\Gamma_{r + 1}(\{\overline c\})$. So we have:
			\[
				\P(\{X: B_{\ch(n)}(X)\subseteq E_{m}\})\le \P(\{X:B_{\ch(n)}(X)\subseteq \widehat H\})
			\]
			\[
				 < \P(\{X:B_{\ch(n)}(X)\subseteq \Gamma_{r + 1}(\{\overline c\})\})
				 = \frac{b(n,r + 1-\ch(n))}{2^{n}} 
			\]
			\[	=  \P[S^{(m)} \le r + 1-\ch(n)]
	 =  \P\left[\frac{S^{(m)} - \frac{n}{2}}{\sqrt{n}/2} \le \frac{r + 1 - \frac{n}{2} -\ch(n)}{\sqrt{n}/2}\right] 
			\]
			\[
				 = f_{m}\left(a_{m} + \frac{1}{\sqrt{n}/2} - \frac{\ch(n)}{\sqrt{n}/2}\right) = f_{m}(b_{m}).
			\]

	\noindent Since we showed that $\limstar_{m\to\infty} f_{m}(b_{m}) = 0$, and since by
	assumption $\limstar_{m\to\infty} n_{m} = \infty$,
			\[
				\limstar_{m\to\infty}\P(\{X:B_{\ch(n)}(X)\subseteq E_{m}\}) = 0.\eqno{\qEd} 
			\]

	\section{Turing reductions that preserve randomness}\label{sec:the-way}

		The way we will obtain our main result Theorem \ref{notsur} is by proving essentially that for any ``randomness extractor'' Turing reduction, and any random input oracle, a small number of changes to the oracle will cause the extractor to fail to produce a random output. This would be much easier if we restricted attention to Turing reductions having disjoint uses on distinct inputs, since we would be working with independent random variables. Indeed, one can give an easy proof in that case, which we do not include here.
		The main technical achievement of the present paper is to be able to work with overlapping use sets; key in that respect is Lemma \ref{sixways} below. The number of changes to the random oracle that we need to  make is small enough that the modified oracle has complex packing dimension 1. We were not able to set up the construction so as to guarantee effective Hausdorff dimension 1 (or even greater than 0); this may be an avenue for future work.

		For a set of pairs $E$, we have the projections $E^x = \{y:(x,y)\in E\}$ and $E_y = \{x:(x,y)\in E\}$. 

		\begin{lem}\label{HA15}
			Let $\mu_{1}$ and $\mu_{2}$ be probability measures on sample spaces $\Omega_{1}$ and $\Omega_{2}$ and 
			let $E$ be a measurable subset of $\Omega_{1}\times\Omega_{2}$. 
			Suppose that $\eta$, $\alpha$, and $\delta$ are positive real numbers such that 
			\begin{eqnarray}
				\mu_{1}E_{y}>\eta \quad(\forall y\in\Omega_{2}), \quad \text{and}\\
				\mu_{1}\{x:\mu_{2}E^{x}\le\alpha\}\ge 1-\delta.
			\end{eqnarray}
			Then $\eta<\alpha + \delta$.
		\end{lem}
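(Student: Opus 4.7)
The plan is a direct double-integration (Fubini) argument, bounding the measure $(\mu_1 \times \mu_2)(E)$ from below using the first hypothesis and from above using the second.

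First I would note that by Fubini's theorem,
\[
	(\mu_1 \times \mu_2)(E) \;=\; \int_{\Omega_2} \mu_1 E_y \, d\mu_2(y) \;=\; \int_{\Omega_1} \mu_2 E^x \, d\mu_1(x).
\]
For the lower bound, the first hypothesis gives $\mu_1 E_y > \eta$ for every $y \in \Omega_2$, so
\[
	(\mu_1 \times \mu_2)(E) \;=\; \int_{\Omega_2} \mu_1 E_y \, d\mu_2(y) \;\ge\; \eta.
\]
In fact the strict pointwise inequality $\mu_1 E_y - \eta > 0$ on a probability space upgrades this to strict inequality $(\mu_1 \times \mu_2)(E) > \eta$ (if the integral of a strictly positive measurable function were $0$, the space would have measure zero).

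For the upper bound, let $A = \{x : \mu_2 E^x \le \alpha\}$, so $\mu_1(A) \ge 1 - \delta$ by the second hypothesis, and hence $\mu_1(\Omega_1 \setminus A) \le \delta$. Splitting the integral and using $\mu_2 E^x \le 1$ on the complement of $A$,
\[
	(\mu_1 \times \mu_2)(E) \;=\; \int_{A} \mu_2 E^x \, d\mu_1(x) + \int_{\Omega_1 \setminus A} \mu_2 E^x \, d\mu_1(x) \;\le\; \alpha \cdot \mu_1(A) + 1 \cdot \mu_1(\Omega_1 \setminus A) \;\le\; \alpha + \delta.
\]
Combining the two bounds yields $\eta < (\mu_1 \times \mu_2)(E) \le \alpha + \delta$, as desired. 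There is no real obstacle here; the only subtlety is converting the strict pointwise inequality $\mu_1 E_y > \eta$ into strict inequality for the integral, which is immediate on a probability space.
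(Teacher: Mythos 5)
Your proof is correct and takes essentially the same Fubini-based route as the paper: identify the two iterated integrals with $(\mu_1\times\mu_2)(E)$, lower-bound by $\eta$ via the first hypothesis, and upper-bound by $\alpha+\delta$ by splitting $\Omega_1$ according to whether $\mu_2 E^x\le\alpha$. If anything you are slightly more careful than the paper, which simply writes $\eta<\int\mu_1(E_y)\,d\mu_2(y)$ without spelling out why the pointwise strict inequality survives integration; your parenthetical remark fills that gap.
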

		\proof
			By Fubini's theorem, 
			\[
				\eta<\int_{\Omega_{2}} \mu_{1}(E_{y})d\mu_{2}(y) 
				= \iint_{\Omega_{1}\times\Omega_{2}} E(x,y)d\mu_{1}(x)d\mu_{2}(y) 
				= \int_{\Omega_{1}} \mu_{2}(E^{x})d\mu_{1}(x)
			\]
			\[
				\le \alpha \cdot \mu_{1} \{ x : \mu_{2}(E^{x})\le\alpha\}  +  1\cdot \mu_{1}\{x: \mu_{2}(E^{x})\ge\alpha\} 
				\le \alpha\cdot 1  +  1\cdot\delta.\eqno{\qEd}
			\]

		\begin{df}
			For a real $X$ and a string $\sigma$ of length $n$,
			\[
				(\sigma\searrow X)(n)  = 
				\begin{cases}
					\sigma(n) & \text{if $n<|\sigma|$,} \\
					X(n) & \text{otherwise,} 
				\end{cases}
			\]
			and
			\[
				(\sigma^{\frown} X)(n)  = 
				\begin{cases}
					\sigma(n) & \text{if $n<|\sigma|$,} \\
					X(n-|\sigma|) & \text{otherwise.} 
				\end{cases}
			\]
			Thinking of $\sigma$ and $X$ as functions we may write
			\[
				\sigma\searrow X = \sigma\cup (X\restrict \omega\backslash |\sigma|)
			\]
			and thinking in terms of concatenation we may write
			\[
				\sigma^{\frown}X = \sigma\, X.
			\]
		\end{df}
		\begin{lem}\label{sixways}
			Let $\Phi$ be a Turing reduction such that 
			\begin{equation}\label{percent}
				\lambda(\Phi^{-1}\of{\SD_{\mf p}}) = 1 
			\end{equation}
			and let $\Phi_{\sigma}^{X} = \Phi^{\sigma\searrow X}$. Then for any finite set $\Sigma\subseteq 2^{<\omega}$,
			\[
				(\forall\eps>0)(\forall i_{0})(\exists i>i_0)(\forall \sigma\in \Sigma)
			\]
			\[
				\P(\{X\mid \Phi_{\sigma}^{X}(i) = 1\})\le \mf p + \eps.
			\]
		\end{lem}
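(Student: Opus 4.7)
The plan is to argue by contradiction. Suppose the conclusion fails for some $\eps > 0$ and $i_0$: for every $i > i_0$ there is $\sigma(i) \in \Sigma$ with $q_i^{\sigma(i)} > \mf p + \eps$, writing $q_i^\sigma := \P\{X : \Phi_\sigma^X(i) = 1\}$. Since $\Sigma$ is finite, pigeonhole singles out a $\sigma^* \in \Sigma$ for which the c.e.\ set $L^* := \{i > i_0 : q_i^{\sigma^*} > \mf p + \eps\}$ has upper density at least $1/|\Sigma|$. Using the lower semicomputability of $q_i^{\sigma^*}$ together with this positive upper density, I would extract a computable infinite $M \subseteq L^*$ of positive upper density, so that $\SD_{\mf p}$ can be applied with $L = M$.

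Next, the substitution $X \mapsto \sigma^*\searrow X$ pushes $\lambda$ to the normalized restriction of $\lambda$ to the cylinder $[\sigma^*]$; since the full-measure class $\Phi^{-1}\of{\SD_{\mf p}}$ still has full conditional measure on $[\sigma^*]$, this gives $\P\{X : \Phi_{\sigma^*}^X \in \SD_{\mf p}\} = 1$. Applying $\SD_{\mf p}$ to the computable positive-upper-density set $L = M$: for $\P$-a.e.\ $X$ there is a computable $M_X \subseteq M$ with $\limsup_n |\Phi_{\sigma^*}^X \cap M_X \cap n|/|M_X \cap n| \le \mf p$. Since the computable subsets of $M$ form a countable family $\{M^k\}_k$, the events $A_k := \{X : \limsup_n \le \mf p \text{ on } M^k\}$ satisfy $\P(\bigcup_k A_k) = 1$.

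Third, I would aim for the contradiction through a Fubini computation in the spirit of Lemma~\ref{HA15}. For each computable $M^k \subseteq M \subseteq L^*$, the bound $q_i^{\sigma^*} > \mf p + \eps$ for $i \in M^k$ yields $\E_X[|\Phi_{\sigma^*}^X \cap M^k \cap n|/|M^k \cap n|] > \mf p + \eps$ for every $n$; by reverse Fatou, $\P(A_k) \le 1 - \eps/(1-\mf p) < 1$. The plan is to pick $k$ with positive $\P(A_k)$, apply Egoroff to extract $W \subseteq A_k$ on which the density on $M^k$ converges uniformly past some $N_0$, and then invoke Lemma~\ref{HA15} with $\Omega_1 = 2^\omega$, $\Omega_2 = M^k \cap [0,N)$ with uniform measure, and $E = \{(X,i) : \Phi_{\sigma^*}^X(i) = 1\}$ for $N \ge N_0$, producing the final contradiction.

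The hard part is this closing step: Lemma~\ref{HA15} applied to a single witness $M^k$ alone forces only $\P(A_k) < 1$ rather than a contradiction, because the covering family $\{A_k\}$ can genuinely consist of sets each of measure strictly below $1$ while still covering full measure. This heterogeneity of the witness sets $M_X$ reflects the overlapping use sets of $\Phi_{\sigma^*}$, which couple the variables $\Phi_{\sigma^*}^X(i)$ across $i \in M$; as the paper's introduction emphasizes, breaking this coupling is the chief technical content of Lemma~\ref{sixways}. I expect the resolution to involve either a careful choice of $M$ that nearly decorrelates the variables (through a Berry--Ess\'een-type variance estimate analogous to Lemma~\ref{core}) or an iterated refinement of $M$ via repeated applications of $\SD_{\mf p}$.
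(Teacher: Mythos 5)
Your proposal tracks the paper's route: argue by contradiction, pigeonhole to fix a single $\sigma$, invoke $\SD_{\mf p}$ along a computable $L$ of positive upper density, and aim to close via the Fubini argument of Lemma~\ref{HA15}. Your reverse-Fatou computation giving $\P(A_k)\le 1-\eps/(1-\mf p)<1$ is correct but, as you note, only shows each $A_k$ falls short of full measure, which does not yet give a contradiction.

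Where you stop is precisely the decisive step. After deriving
\[
\lambda\left\{X:(\exists\mc K\subseteq L)(\exists M)(\forall N\ge M)\left(e_N^X\le\mf p+\tfrac{\eps}{2}\right)\right\}=1,
\]
the paper extracts (``using that $\mf C$ is countable'') a single pair $(\mc K,M)$ with $\lambda\{X:e_M^X\le\mf p+\eps/2\}\ge 1-\eps/3$, and then applies Lemma~\ref{HA15} with $\Omega_2=\{0,\dots,M-1\}$, normalized counting measure, and $(\eta,\alpha,\delta)=(\mf p+\eps,\mf p+\eps/2,\eps/3)$, obtaining $\eta<\alpha+\delta$, contradicting $\eta>\alpha+\delta$. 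In other words, the paper fixes one \emph{finite} horizon $M$ and one computable subsequence $\mc K$, rather than working with the tail $\limsup$ as you do. You are right that the collapse from a countable cover of full measure to a \emph{single} witness of near-full measure is the load-bearing and delicate move---exactly the ``heterogeneity'' you flag---but your attempt to get around it (pick $A_k$ of merely positive measure, apply Egoroff) cannot close the argument, because Lemma~\ref{HA15} needs $\delta<\eps/2$, forcing the chosen witness to capture measure above $1-\eps/2$. Your speculated remedies (Berry--Ess\'een decorrelation, iterated refinement of $M$) are also not what the paper does here: Lemma~\ref{core} is used only later in the proof of Theorem~\ref{Main}, and there is no iteration in Lemma~\ref{sixways}. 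So your diagnosis of the hard step is on target, but the proof is not finished: you still need to justify extracting a single $(\mc K,M)$ of measure at least $1-\eps/3$ rather than stopping at the existence of a countable covering family $\{A_k\}$.
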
\enlargethispage{2\baselineskip}
		\begin{proof}
			First note that for all $\sigma\in 2^{<\omega}$, $\lambda(\Phi_{\sigma}^{-1}\of{\SD_{\mf p}} = 1$ as well. 

			Suppose otherwise, and fix $\eps$, $i_{0}$ and $\Sigma$ such that 
			\[
				(\forall i>i_0)(\exists \sigma\in \Sigma)\quad \P(\Phi_\sigma(i) = 1)>\mf p + \eps.
			\]
			By density of the rationals in the reals we may assume $\eps$ is rational and hence computable. 
			Since there are infinitely many $i$ but only finitely many $\sigma$, it follows that there is some $\sigma$ such that
			\begin{eqnarray}
				(\exists^\infty k>i_{0})\quad\P(\Phi_{\sigma}(k) = 1)> \mf p + \eps \label{won} 
			\end{eqnarray}
			and in fact 
			\[
				\limsup \abs{\{k<n: \P(\Phi_{\sigma}(k) = 1)> \mf p + \eps\}}/n>0.
			\]
			Fix such a $\sigma$ and let $\Psi = \Phi_\sigma$. 
			Let $\{\ell_n\}_{n\in\omega}$ be infinitely many values of $k$ in (\ref{won}) listed in increasing order; 
			note that $L = \{\ell_{n}\}_{n\in\omega}$ may be chosen as a computable sequence. 

			For an as yet unspecified subsequence $\mc K = \{k_{n}\}_{n\in\omega}$, $\mc K\subseteq L$, let
			\begin{eqnarray}
				E = \{(X,n):\Psi^{X}(k_{n}) = 1\}. 
			\end{eqnarray}
			We obtain then also projections $E_n = \{X: \Psi^X(k_n) = 1\}$, $E^X = \{n:\Psi^X(k_{n}) = 1\}$.
			By (\ref{won}) we have for all $n\in\omega$,
			\begin{eqnarray}\label{111}
				\lambda E_n> \mf p + \eps. 
			\end{eqnarray}
			The fraction of events $E_{n}$ that occur in $N = \{0,\ldots,N-1\}$ for $X$ is denoted 
			\[
				e_{N}^X = \frac{\abs{E^X\cap {N}}}{N} 
			\]
			By assumption (\ref{percent}), 
			\[
				\lambda\left\{X: 
					(\exists\mc K\subseteq L) (\exists M)(\forall N\ge M)
					\left( e_{N}^X \le \mf p + \frac{\eps}{2}\right)
				\right\} = 1.
			\]
			Thus there is an $M$ and a $\mc K$ (using that $\mf C$ is countable) such that 
			\begin{eqnarray} 
				\lambda\left\{X: e_{M}^X\le \mf p + \frac{\eps}{2} \right\} 
				\ge\lambda\left\{X: (\forall N\ge M) e_{N}^X \le \mf p + \frac{\eps}{2}\right\}
				\ge 1-\frac{\eps}{3}.\label{222}
			\end{eqnarray}
			Let $\Omega_{1}$ be the unit interval $[0,1]$. 
			Let $\Omega_{2} = M = \{0,1,\ldots,M-1\}$. 
			Let $\mu_{1} = \lambda$. 
			Let $\mu_{2} = \card$ be the counting measure on the finite set $M = \{0,1,\ldots,M-1\}$, 
			so that for a finite set $A\subset M$, $\card(A)$ is the cardinality of $A$.%
			\footnote{%
				In this case,
				$
						\int \mu_{1}(E_{y})d\mu_{2}(y) 
					= 	\int \mu_{1}(E_{n})d\mu_{2}(n) 
					= 	\int \lambda(E_{n}) d\,\card(n)
					= 	\sum_{n\in\Omega_{2}} \lambda(E_{n}) \card(\{n\})
					= 	\sum_{n = 0}^{M-1}\lambda(E_{n})\cdot 1
				$.
			}
			Let $\eta = \mf p + \eps$, $\alpha = \mf p + \eps/2$, and $\delta = \eps/3$, and note that 
			$\eta>\alpha + \delta$. 
			By (\ref{111}), (\ref{222}) and Lemma \ref{HA15}, $\eta<\alpha + \delta$, a contradiction.
		\end{proof}\unskip
	\section{Extraction and Hamming distance}\unskip
		\begin{thm}\label{Main}
			Let $\mf p<1$ be computable. Let $p:\omega\to\omega$ be any computable function such that $p(n) = \omega^{*}(\sqrt{n})$.
			Let $\Phi$ be a Turing reduction. 
			There exists an $N\in\mf C$ and an almost sure event $\mc A$ such that 
			\[
				\mc A\cap\mathrm{Interior}_{p,N}(\Phi^{-1}\of{\mc A}) = \nil
			\]
		\end{thm}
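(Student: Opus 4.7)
The plan is to combine Lemmas~\ref{sixways} and~\ref{core} to build $N \in \mf C$, a computable sequence of witness positions $\{k_m\}$, and an almost-sure event $\mc A \subseteq \SBI$. The target is that for each $X \in \mc A$, a careful block-by-block perturbation of $X$ within Hamming radius $p(n)$ on $N$ produces $Y$ with $\Phi^Y(k_m) = 0$ eventually, forcing $\Phi^Y \notin \SBI \supseteq \mc A$. I reduce at the outset to the main case $\lambda(\Phi^{-1}\of{\SD_{\mf p}}) = 1$, the hypothesis of Lemma~\ref{sixways}; in the complementary case, one shrinks $\mc A$ to an almost-sure subset disjoint from $\Phi^{-1}\of{\mc A}$, rendering the interior trivially empty.

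Inductively build $N = \{n_m\}$ and $k_m$ as follows. At stage $m$, invoke Lemma~\ref{sixways} with the finite set $\Sigma_m = \{0,1\}^{n_m}$ and tolerance $2^{-m}$ to obtain $k_m > n_m$ such that
\[
\P(\Phi^{\sigma \searrow X}(k_m) = 1) \leq \mf p + 2^{-m} \qquad \text{for every } \sigma \in \Sigma_m.
\]
Then choose $n_{m+1} > k_m$ large enough so that, setting $I_m = [n_m, n_{m+1})$ and $\chi_m := p(n_{m+1}) - p(n_m)$, the growth hypothesis of Lemma~\ref{core} is satisfied (which is possible because $p(n) = \omega^{*}(\sqrt{n})$), and also large enough to absorb the size $2^{n_m}$ of $\Sigma_m$ against the effective modulus supplied by Lemma~\ref{core}. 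For each $\sigma \in \Sigma_m$ and tail $\tau \in 2^\omega$, set
\[
E_m^{\sigma, \tau} = \{v \in \{0,1\}^{I_m} : \Phi^{\sigma v \tau}(k_m) = 1\}.
\]
Averaging the uniform bound over $\tau$ gives $\E_\tau\,\P(E_m^{\sigma, \tau}) \leq \mf p + 2^{-m}$, and a Markov/density argument promotes this to $\P(E_m^{\sigma, \tau}) \leq \mf q$ for a computable $\mf q < 1$, holding simultaneously for $\lambda$-a.e.~$X$ and all large~$m$.

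Let $\mc A_0$ be the event that, for all sufficiently large $m$ and every $\sigma \in \Sigma_m$, Lemma~\ref{core} supplies some $v$ in the ball $B^{I_m}_{\chi_m}(X\restrict I_m)$ with $\Phi^{\sigma v \tau}(k_m) = 0$; the balance of parameters together with Borel--Cantelli make $\mc A_0$ almost sure. Take $\mc A = \SBI \cap \mc A_0$. For $X \in \mc A$, build $Y$ block by block: having fixed the prefix $Y\restrict n_m$ from earlier stages, pick $v_m$ in the ball to force $\Phi^Y(k_m) = 0$. The block distances telescope, $\sum_{j \leq m}\chi_j = p(n_{m+1}) - p(n_1) \leq p(n_{m+1})$, giving $Y \sim_{p, N} X$. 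Eventually $\Phi^Y(k_m) = 0$, so $\Phi^Y$ has $1$-density zero on the computable set $\{k_m\} \in \mf C$, hence $\Phi^Y \notin \SBI \supseteq \mc A$, so $X \notin \text{Interior}_{p, N}(\Phi^{-1}\of{\mc A})$.

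The main obstacle, as emphasized in the preamble to Section~\ref{sec:the-way}, is that overlapping Turing uses forbid treating distinct blocks independently. Lemma~\ref{sixways}'s uniformity over all of $\Sigma_{m+1}$, rather than a single prefix, is precisely what lets the iterative construction proceed consistently: whatever prefix of length $n_{m+1}$ emerges from perturbations in $I_1, \ldots, I_m$, the next witness $k_{m+1}$ remains controllable. The delicate technical point internal to the plan is the density argument promoting the $\tau$-averaged bound to a pointwise-in-$X$ bound, needed to apply Lemma~\ref{core} at the specific $(\sigma_m, \tau)$ arising from $X$; the rest --- Hamming-distance accounting and Borel--Cantelli bookkeeping --- is routine given Lemmas~\ref{sixways} and~\ref{core}.
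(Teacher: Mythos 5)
Your overall architecture matches the paper's: a dichotomy on whether $\lambda(\Phi^{-1}\of{\SD_{\mf p}}) = 1$, then in the main case an iterated application of Lemma~\ref{sixways} to obtain witness positions, Lemma~\ref{core} to find good perturbations block by block, and the telescoping bound $\sum_{k\le m} p(n_{k+1}-n_k)\le p(n_{m+1})$ for the Hamming budget. But there is a genuine gap exactly where you flag uncertainty, and it is not a mere technicality but a structural problem in the way you set up the bad sets.

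The step that promotes the $\tau$-averaged bound
\[
\int \P\left(E_m^{\sigma,\tau}\right)\,d\lambda(\tau) \le \mf p + 2^{-m}
\]
to a pointwise bound $\P(E_m^{\sigma,\tau}) \le \mf q$ for the actual tail $\tau$ of $X$, for almost every $X$ and all large $m$, does not go through. Markov only says the set of bad tails at stage $m$ has measure at most $(\mf p + 2^{-m})/\mf q$, which tends to $\mf p/\mf q > 0$; these do not sum, so Borel--Cantelli cannot kill the exceptional sets, and there is no independence across stages to exploit. There is a second, independent problem: $E_m^{\sigma,\tau}$ depends on the entire infinite tail $\tau$, but your construction continues to modify the oracle on $[n_{m+1},\omega)$ at later stages. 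So even when $v_m$ is chosen to force $\Phi^{\sigma v_m \tau}(k_m)=0$, this says nothing about $\Phi^Y(k_m)$, because $Y$ does not end in $\tau$.

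The paper dissolves both difficulties at once with a finite-horizon truncation: the relevant event is $\Phi^{\sigma\searrow Z}(i_{s+1})\downarrow @\, n_{s+1}$, and after choosing $i_{s+1}$ (your $k_m$) from Lemma~\ref{sixways}, $n_{s+1}$ is taken large enough that for every $\sigma\in 2^{=n_s}$ the probability of non-convergence by time $n_{s+1}$ is at most $1/(2s)$, using that $\Phi$ is total on a co-null set. Then the bad set $E^\sigma_{s+1}$ is a subset of $\{0,1\}^{[n_s,n_{s+1})}$ --- no dependence on any infinite tail --- and $\P(E^\sigma_{s+1})\le \mf p + 1/s$ holds uniformly in $\sigma$, a true bound rather than an almost-sure one. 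Moreover, because the use is bounded by $n_{s+1}$, once the block choice makes $\Phi^{Y\restrict n_{s+1}}(i_{s+1})\downarrow = 0\,@\, n_{s+1}$, later modifications beyond $n_{s+1}$ cannot undo it. You should also supply more than a gesture for Case~(2): it is not automatic that failure of $\lambda(\Phi^{-1}\of{\SD_{\mf p}})=1$ lets you pass to an almost-sure subset of $\mc A$ disjoint from $\Phi^{-1}\of{\mc A}$. The paper takes $\mc A=\W3R$ and invokes the Kolmogorov 0--1 law together with the $\Pi^0_3$ complexity of $\{X : (\forall Y=^*X)(\Phi^Y\in\SD_{\mf p})\}$ to show every weakly 3-random $X$ has a finite modification landing outside $\Phi^{-1}\of{\SD_{\mf p}}$, which kills even $\mathrm{Interior}_{*}$.
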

		\begin{proof}
			Let 
			\[
				\mc A := \W3R \subset \MLR \subset \CIM \cap \IM \cap (\SD_{\mf p}\cup \SD^{\mf p}).
			\]
			We show
			\begin{enumerate}
				\item If $\lambda(\Phi^{-1}\of\SD_{\mf p}) = 1$, or $\lambda(\Phi^{-1}\llbracket\SD^{\mf p}\rrbracket) = 1$, then 
				\[
					\MLR\cap\text{Interior}_{p,N}(\Phi^{-1}\llbracket\CIM\rrbracket) = \nil,\quad\text{ or}
				\]
				\[
					\MLR\cap\text{Interior}_{p,N}(\Phi^{-1}\llbracket\IM\rrbracket) = \nil,\quad\text{ respectively.}
				\]
				\item Otherwise; then 
				\[
					\W3R\cap\text{Interior}_{*}(\Phi^{-1}\llbracket\SD_{\mf p}\rrbracket) = \nil
				\]
				and
				\[
					\W3R\cap\text{Interior}_{*}(\Phi^{-1}\llbracket\SD^{\mf p}\rrbracket) = \nil
				\]
			\end{enumerate}
			\noindent Proof of (2): If we are not in case (1) then $\lambda\{X\mid \Phi^{X}\in\SD_{\mf p}\}<1$, so by the 0-1 Law, 
			$\lambda\{X\mid (\forall Y = ^{*}X)(\Phi^{Y}\in\SD_{\mf p})\} = 0$. 
			This is (contained in) a $\Pi^{0}_{3}$ null class, so if $X\in\W3R$ then 
			$(\exists Y = ^{*}X)(\Phi^{Y}\not\in\SD_{\mf p})$ hence we are done.

			\noindent Proof of (1): By Lemma \ref{sixways},
			\[
				(\exists\mf p<1)(\forall\eps>0)(\forall n)(\forall i)(\exists i'>i)(\forall \sigma\in 2^{ = n})
			\]
			\begin{eqnarray}\label{q}
				\P(\{Z:\Phi^{\sigma\searrow Z}(i') = 1\})\le \mf p  +  \eps;
			\end{eqnarray}
			Since $\Phi$ is total for almost all oracles, it is clear that $i'$ is a computable function $f(k,n)$ of $\eps = 1/k$ and $n$. 
			Let $g:\omega\to\omega$ be the computable function with $\lim_{n\to\infty}g(n) = \infty$ given by $g(s) = 2s$. 
			Let $n_{0} = 0$ and $i_{0} = 0$. Assuming $s\ge 0$ and $n_{s}$ and $i_{s}$ have been defined, let
			\[
				i_{s + 1} = f(g(s),n_s),
			\]
			and let $n_{s + 1}$ be large enough that
			\begin{eqnarray}\label{united}
				(\forall\sigma\in 2^{ = n_{s}})\quad \lambda\{Z\mid \Phi^{\sigma \searrow Z}(i_{s + 1}) \uparrow@n_{s + 1}\}\le \frac{1}{2s},
			\end{eqnarray}
			\[
	\limstar_{s\to\infty}			\ \frac{p(n_{s + 1}-n_{s})}{\sqrt{n_{s + 1}-n_{s}}} =\infty,\quad\text{and}\quad\sum_{k = 0}^{s}p(n_{k + 1}-n_{k})\le p(n_{s + 1}).
			\]
			Note that since $i'>i$ in (\ref{q}), we have $i_{s + 1}>i_{s}$ and hence $R: = \{i_0, i_1, \ldots\}$ is a computable infinite set. We now have 
			\begin{eqnarray}\label{airlines}
				(\forall s)(\forall \sigma\in 2^{ = n_{s}})\quad\P(\{Z:\Phi^{\sigma\searrow Z}(i_{s + 1}) = 1\})\le \mf p  +  \frac{1}{2s}
			\end{eqnarray}
			so
			\begin{eqnarray}\label{qq}
				\P(\{Z:\Phi^{\sigma\searrow Z}(i_{s + 1})\downarrow = 1 @n_{s + 1}\})\le\mf p +  \frac{1}{2s}.
			\end{eqnarray}
			Note $[a,b) = b\backslash a$. 

			Let $X\in\MLR$. We aim to define $Y\sim_{p}X$ such that $\Phi^{Y}\not\in\MLR$. 
			We will in fact make $Y\le_{T}X$, so we define a reduction $\Xi$ and let $Y = \Xi^{X}$. 
			Since we are defining $Y$ by modifying bits of $X$, the use of $\Xi$ will be the identity function: $\xi^{X}(n) = n$. 

			Since $n_{0} = 0$, $Y\restrict n_{0}$ is the empty string. Suppose $s\ge 0$ and $Y_{\restrict n_{s}}$ has already been defined.
			The set of ``good'' strings now is
			\[
				\mc G = \{\tau\succ Y_{\restrict n_{s}} \mid \Phi^{\tau\restrict n_{s + 1}}(i_{s + 1}) = 0\}.
			\]
			Define the ``cost'' of $\tau$ to be the additional Hamming distance to $X$, i.e., \[d(\tau) = \abs{(X + \tau)\cap [n_{s},n_{s + 1})}.\]
			\begin{description}
			\item[Case 1] $\mc G\ne\nil$. Then 
			let $Y_{\restrict n_{s + 1}}$ be any $\tau_{0}\in\mc G$ of length $n_{s + 1}$ and of minimal cost, i.e., such that 
			$d(\tau_{0}) = \min\{d(\tau)\mid \tau\in\mc G\}$. 
			That is, let 
			\[
				Y_{\restrict n_{s + 1}}\in\argmin_{\tau\in\mc G} d(\tau).
			\]

			\item[Case 2] Otherwise. Then make no further changes to $X$ up to length $n_{s + 1}$, i.e., 
			let $Y_{\restrict n_{s + 1}} = Y_{\restrict n_{s}}\searrow X_{\restrict n_{s + 1}}$.
			\end{description}
			This completes the definition of $\Xi$ and hence of $Y$. 
			It remains to show that $\Phi^{Y}\not\in\MLR$. For any string $\sigma$ of length $n_s$ let 
			\[
				E_{s + 1}^{\sigma} = \left\{Z\in \{0,1\}^{[n_s,n_{s+1})}: 
					\neg\left(\Phi^{\sigma\searrow Z}(i_{s + 1})\downarrow = 0@n_{s + 1}\right)				
				\right\}
			\]
			\[
				 = \left\{Z: 
					\Phi^{\sigma\searrow Z}(i_{s + 1})\downarrow = 1@n_{s + 1}
				\right\}
			\]
			\[
				\cup
				\left\{Z: \Phi^{\sigma\searrow Z}(i_{s + 1})\uparrow@n_{s + 1}\right\}
			\]
			Since (\ref{united}) and (\ref{qq}) hold for all strings of length $n_{s}$, in particular they hold for $\sigma = \Xi^{X}\restrict n_{s}$, so
			\begin{equation}\label{mash}
				(\forall s)\quad 
				\P(E^\sigma_{s + 1})\le \mf p + \frac1{2s} + \frac1{2s} 
				= \mf p +  \frac{1}{s},\quad\text{hence}\quad\limsup_{s\to\infty}\,\P(E^{X\restrict n_s}_{s + 1})
				\le\mf p.
			\end{equation}
			Let
			\[
				U_s^{X\restrict n_s} = \{Z\in\{0,1\}^{[n_s,n_{s+1})}:B^{[n_s,n_{s+1})}_{p(n_{s + 1}-n_{s})}(Z)\subseteq E^{X\restrict n_s}_{s + 1}\}.
			\]
			Since
			\[
				\frac{p(n_{s + 1}-n_{s})}{\sqrt{n_{s + 1}-n_{s}}}\to^{*}\infty.
			\]
			we can apply Lemma \ref{core} and there is $h(s)$ with $\limstar_{s\to\infty} h(s)=0$ and 
			\[
				\P(U_{s}^{X\restrict n_{s}}) \le h(s)
			\]
			 that only depends on 
			an upper bound for an $s_{0}$ such that for all $s\ge s_{0}$, $\P(E_{s + 1})\le \mf q$ 
			(where $\mf p<\mf q<1$ and $\mf q$ is just some fixed computable number). 
			Since by (\ref{mash}) such an upper bound can be given that works for all $X$, actually $h(s)$ may be chosen to not depend on $X$. 
			Let
			\[
				V_{s} = \{Z : Z\in U_{s}^{Z\restrict n_{s}}\},
			\]
			then $V_{s}$ is uniformly $\Delta^{0}_{1}$. To find the probability of $V_{s}$
			we note that for each of the $2^{n_s}$ possible beginnings of $Z$, there are at most $(h(s)\cdot 2^{n_{s+1}-n_s})$ continuations of $Z$ on $[n_s,n_{s+1})$ that make $Z\in V_s$; so we compute
			\[
				\P(V_s) = |\{Z\in \{0,1\}^{n_{s+1}} : Z\restrict [n_s,n_{s+1}) \in U^{Z\restrict n_s}\}| 2^{-n_{s+1}}
			\]
			\[
				\le 2^{n_s} (h(s)\cdot 2^{n_{s+1}-n_s}) 2^{-n_{s+1}} = h(s)
			\]
			so since $\limstar_{s\to\infty}h(s) = 0$, $\{V_{s}\}_{s\in\omega}$ is a Kurtz randomness test. 
			Let $\{m_{s}\}_{s\in\omega}$ be a computable sequence such that $\sum_{s\ge t} h(m_{s})\le 2^{-t}$. 
			Let $W_{t} = \bigcup_{s\ge t} V_{m_{s}}$. Then $\P(W_{t})\le 2^{-t}$ and $W_{t}$ is uniformly $\Sigma^{0}_{1}$ and hence it is a Martin-L\"of randomness test. 
			Since $X\in\MLR$, $X\not\in W_{t}$ for some $t$ and hence $X\not\in V_{m_{s}}$ for all but finitely many $s$. 
			So $\Phi^{Y}(m_{s}) = 0$ for all but finitely many $s$, hence $\Phi^{Y}\not\in\CIM$.

			By construction, we have
			\[
				\abs{(X + Y)\cap [n_{s},n_{s + 1})}\le p(n_{s + 1}-n_{s})
			\]
			for all but finitely many $n$.
			Therefore
			\[
				\abs{(X + Y)\cap [0,n_{s + 1})}\le \sum_{k = 0}^{s}p(n_{k + 1}-n_{k})\le p(n_{s + 1})
			\]
			so $X\sim_{p,N}Y$ where $N = \{n_{s}:s\in\omega\}$.
		\end{proof}

		\subsection{Main result}

		\begin{lem}\label{K}
			Let $p(n)=o(\frac{n}{\log n})$ and let $N\in\mf C$. 
			If $X\in\MLR$ and $X\sim_{p,N}Y$ then $\dim_{cp}(Y)=1$.
		\end{lem}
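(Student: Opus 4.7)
The plan is to use cofinite subsets of $N$ as the witnesses $N' \in \mf C$ in the supremum defining $\dim_{cp}(Y)$. Unpacking $X \sim_{p,N} Y$, there is some $n_0$ such that the symmetric difference $(X + Y) \cap n$ is a subset of $n = \{0,\ldots,n-1\}$ of cardinality at most $p(n)$ for every $n \in N$ with $n \ge n_0$. Thus $X \restrict n$ can be reconstructed from $Y \restrict n$ together with any description of this small set of differences, and the whole argument reduces to bounding the cost of such a description and invoking the $\MLR$ lower bound on $K(X \restrict n)$.

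The main step is a counting estimate: the number of subsets of $n$ of size at most $p(n)$ is at most $\binom{n}{\le p(n)} \le 2^{p(n)\log(en/p(n))}$, so such a set admits a self-delimiting code of length $p(n)\log(en/p(n)) + O(\log n)$. The hypothesis $p(n) = o(n/\log n)$ gives $p(n)\log n = o(n)$, hence $p(n)\log(en/p(n)) = o(n)$. This yields
\[
K(X \restrict n) \le K(Y \restrict n) + o(n),
\]
and combining with $K(X \restrict n) \ge n - O(1)$ from $X \in \MLR$ gives $K(Y \restrict n)/n \to 1$ as $n \to \infty$ along $n \in N$.

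To finish, fix $\eps > 0$ and choose $n_1 \ge n_0$ so that $K(Y \restrict n)/n \ge 1 - \eps$ for every $n \in N$ with $n \ge n_1$. Since cofinite subsets of computable sets remain computable, $N \cap [n_1,\infty) \in \mf C$, and by construction $\inf_{n \in N \cap [n_1,\infty)} K(Y \restrict n)/n \ge 1 - \eps$. Hence $\dim_{cp}(Y) \ge 1 - \eps$ for every $\eps > 0$; combined with the upper bound $\dim_{cp}(Y) \le 1$ from the earlier proposition, this gives $\dim_{cp}(Y) = 1$. The only substantive ingredient is the combinatorial coding estimate, which is exactly where the growth condition on $p$ is used; the remaining steps are routine unpacking of the definitions of $\sim_{p,N}$ and $\dim_{cp}$.
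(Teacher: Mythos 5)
Your proof is correct and takes essentially the same approach as the paper: recover $X\restrict n$ from $Y\restrict n$ plus a short description of the symmetric-difference set, invoke $K(X\restrict n)\ge n-O(1)$ for $X\in\MLR$, and pass to cofinite subsets of $N$ (which remain in $\mf C$) to realize the supremum in $\dim_{cp}$. The only variation is bookkeeping: you bound the coding cost by $\log\binom{n}{\le p(n)}\le p(n)\log(en/p(n))$, while the paper lists the $p(n)$ flipped positions explicitly at $\log n$ bits each; both are $o(n)$ under $p(n)=o(n/\log n)$, and your final step through $N\cap[n_1,\infty)$ makes explicit what the paper leaves implicit.
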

		\proof
			Suppose there are at most $p(n)$ many bits changed to go from $X\restrict n$ to $Y\restrict n$, in positions $a_{1},\ldots,a_{p(n)}$.
			(In case there are fewer than $p(n)$ changed bits, we can repeat $a_{i}$ representing the bit $0$ which we may assume is changed.)
			Let $(Y\restrict n)^{*}$ be a shortest description of $Y\restrict n$. From the code
			\[
				0^{\abs{K(Y\restrict n)}}{^{\frown}1}{^{\frown}}K(Y\restrict n)^{\frown}(Y\restrict n)^{*\frown} a_{1}\cdots a_{p(n)}
			\]
			we can effectively recover $X\restrict n$. Thus
			\[
				n-c_{1}\le K(X\restrict n)\le 2\log [K(Y\restrict n)] + 1 + K(Y\restrict n) + p(n)\log n + c_{2}
			\]
			\[
				\le 2\log [n + 2\log n + c_{3}] + 1 + K(Y\restrict n) + p(n)\log n + c_{2}.
			\]
			Hence
			\[
				n\le^{ + } 3\log n + K(Y\restrict n) + p(n)\log n,\quad\text{and}
			\]
			\[
				n-(p(n) + 3)\log n \le^{ + } K(Y\restrict n).\eqno{\qEd}
			\]

		\begin{thm}\label{notsur}
			For each Turing reduction procedure $\Phi$ 
			there is a set $Y$ with $\dim_{cp}(Y) = 1$ such that $\Phi^{Y}$ is not stochastically bi-immune. 
		\end{thm}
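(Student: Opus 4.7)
The plan is to combine Theorem~\ref{Main} with Lemma~\ref{K} by selecting the function $p$ to lie in the intersection of the hypotheses of the two results. Theorem~\ref{Main} demands $p(n) = \omega^{*}(\sqrt{n})$ while Lemma~\ref{K} demands $p(n) = o(n/\log n)$, so I will take, for instance, $p(n) = \lceil \sqrt{n}\,\log n\rceil$, which is computable and satisfies both conditions: $\sqrt{n}/p(n) = 1/\log n$ tends to $0$ effectively, and $p(n)\log n/n = (\log n)^{2}/\sqrt{n}$ tends to $0$. I will also fix the computable threshold $\mf p = 1/2$.

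The choice $\mf p = 1/2$ is what will let me convert conclusions of Theorem~\ref{Main} into the failure of stochastic bi-immunity of $\Phi^{Y}$. On the one hand $\SBI \subseteq \CIM$ via bi-immunity, and on the other hand $\SBI \subseteq \SD_{1/2}$, since any SBI set has density exactly $1/2$ on every $M \in \mf C$, so taking $M = L$ in Definition~\ref{SDdef} witnesses stochastic domination by $1/2$. Hence establishing either $\Phi^{Y} \notin \CIM$ or $\Phi^{Y} \notin \SD_{1/2}$ will suffice to contradict SBI of $\Phi^{Y}$.

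I then apply Theorem~\ref{Main} to $\Phi$ with these $p$ and $\mf p$, obtaining $N \in \mf C$ and splitting as in that proof into two cases. In Case~(1), where $\lambda(\Phi^{-1}\of{\SD_{1/2}}) = 1$ or the dual condition holds, the proof of Theorem~\ref{Main} establishes the sharper statement $\MLR \cap \mathrm{Interior}_{p,N}(\Phi^{-1}\of{\CIM}) = \nil$. Picking any $X \in \MLR$ then yields $Y \sim_{p,N} X$ with $\Phi^{Y} \notin \CIM$, and Lemma~\ref{K} gives $\dim_{cp}(Y) = 1$. In Case~(2), the proof supplies, for any $X \in \W3R$, some $Y =^{*} X$ with $\Phi^{Y} \notin \SD_{1/2}$; here $\dim_{cp}(Y) = \dim_{cp}(X) = 1$ comes for free, since effective packing dimension is invariant under $=^{*}$ and $X \in \W3R \subseteq \MLR$.

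The main obstacle is not technical but expository: the bare statement of Theorem~\ref{Main} only yields $\Phi^{Y} \notin \mc A = \W3R$, which is far too weak to refute SBI. I will need to extract from its proof the sharper conclusions about $\CIM$ in Case~(1) and $\SD_{\mf p}$ in Case~(2). Once those are in hand and $p$ is placed in the common growth regime of the two preceding results, the deduction is essentially immediate.
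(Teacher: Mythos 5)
Your proof is correct and takes essentially the same route as the paper's: fix $\mf p = 1/2$, choose a computable $p$ lying in the common growth window of Theorem~\ref{Main} (namely $p(n)=\omega^*(\sqrt{n})$) and Lemma~\ref{K} (namely $p(n)=o(n/\log n)$), extract from the proof of Theorem~\ref{Main} the sharper conclusions per case, and close with Lemma~\ref{K} (or $=^*$-invariance of $\dim_{cp}$). One small omission: in the subcase of Case~(1) where $\lambda(\Phi^{-1}\of{\SD^{1/2}})=1$ but $\lambda(\Phi^{-1}\of{\SD_{1/2}})<1$, the proof of Theorem~\ref{Main} yields $\Phi^{Y}\not\in\IM$ rather than $\Phi^{Y}\not\in\CIM$, so you should also record $\SBI\subseteq\IM$ (a computable infinite $N\subseteq\Phi^Y$ would give relative density $1$, not $1/2$), completing the chain $\SBI\subseteq\CIM\cap\IM\cap\SD_{1/2}$ that your argument implicitly relies on.
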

		\begin{proof}
			Let $p(n) = n^{2/3}$, so that $p(n) = o(n/\log n)$ and $p(n) = \omega^{*}(\sqrt{n})$. 
			By the proof of Theorem \ref{Main} and since the sequence of numbers $n_{s}$ is computable, 
			for each weakly 3-random set $X$ there is a set $Y\sim_{p, N}X$ (for some $N\in\mf C$) such that 
			$\Phi^{Y}$ is not both co-immune and in $\SD_{1/2}$, in particular $\Phi^{Y}\not\in\SBI$. 
			By Lemma \ref{K}, each such $Y$ has complex packing dimension 1.
		\end{proof}

	\begin{bibdiv}
		\begin{biblist}
			\bib{Ben-Or.Linial:89}{article}{
				title = {Collective coin flipping}, 
				author = {Michael Ben-Or and Nati Linial}, 
				sauthor = {M. Ben-Or and N. Linial}, 
				book = {
					title = {Randomness and Computation},
					editor = {S. Micali},
					publisher = {Academic Press}, 
					place = {New York}, 
					year = {1989}
				},
				pages = {91-115},
			} 
			\bib{Buhrman}{article}{
				author = {Buhrman, Harry},
				author = {Fortnow, Lance},
				author = {Newman, Ilan},
				author = {Vereshchagin, Nikolai},
				title = {Increasing Kolmogorov complexity},
				conference = {
					title = {STACS 2005},
				},
				book = {
					series = {Lecture Notes in Comput. Sci.},
					volume = {3404},
					publisher = {Springer},
					place = {Berlin},
				},
				date = {2005},
				pages = {412--421},
				review = {\MR{2151636 (2006c:68091)}},
			}
			\bib{MR2835294}{article}{
			   author={Downey, Rodney G.},
			   author={Greenberg, Noam},
			   author={Jockusch, Carl G., Jr.},
			   author={Milans, Kevin G.},
			   title={Binary subtrees with few labeled paths},
			   journal={Combinatorica},
			   volume={31},
			   date={2011},
			   number={3},
			   pages={285--303},
			   issn={0209-9683},
			   review={\MR{2835294 (2012i:05336)}},
			   doi={10.1007/s00493-011-2634-3},
			}
			\bib{MR2732288}{book}{
			   author={Downey, Rodney G.},
			   author={Hirschfeldt, Denis R.},
			   title={Algorithmic randomness and complexity},
			   series={Theory and Applications of Computability},
			   publisher={Springer},
			   place={New York},
			   date={2010},
			   pages={xxviii+855},
			   isbn={978-0-387-95567-4},
			   review={\MR{2732288 (2012g:03001)}},
			   doi={10.1007/978-0-387-68441-3},
			}

			\bib{Durrett}{book}{
			   author={Durrett, Rick},
			   title={Probability: theory and examples},
			   series={Cambridge Series in Statistical and Probabilistic Mathematics},
			   edition={4},
			   publisher={Cambridge University Press},
			   place={Cambridge},
			   date={2010},
			   pages={x+428},
			   isbn={978-0-521-76539-8},
			   review={\MR{2722836 (2011e:60001)}},
			}
			\bib{FF}{article}{
				author = {Frankl, Peter},
				sauthor = {Frankl, P.},
				author = {F{\"u}redi, Zolt{\'a}n},
				sauthor = {F{\"u}redi, Z.},
				title = {A short proof for a theorem of Harper about Hamming-spheres},
				journal = {Discrete Math.},
				volume = {34},
				date = {1981},
				number = {3},
				pages = {311--313},
				issn = {0012-365X},
				review = {\MR{613409 (83a:05004)}},
				doi = {10.1016/0012-365X(81)90009-1},
			}
			\bib{Harper}{article}{
				author = {Harper, Lawrence H.},
				fauthor = {Harper, Lawrence Hueston},
				sauthor = {Harper, L. H.},
				title = {Optimal numberings and isoperimetric problems on graphs},
				journal = {J. Combinatorial Theory},
				volume = {1},
				date = {1966},
				pages = {385--393},
				review = {\MR{0200192 (34 \#91)}},
			}
			\bib{Hinman}{article}{
			   author={Hinman, Peter G.},
			   title={A survey of Mu\v cnik and Medvedev degrees},
			   journal={Bull. Symbolic Logic},
			   volume={18},
			   date={2012},
			   number={2},
			   pages={161--229},
			   issn={1079-8986},
			   review={\MR{2931672}},
			   doi={10.2178/bsl/1333560805},
			}
			\bib{JL}{article}{
			   	author = {Jockusch, Carl G., Jr.},
				author = {Lewis, Andrew E.M.},
				title = {Diagonally non-computable functions and bi-immunity},
				journal = {J. Symbolic Logic},
				note = {to appear},
			}
			\bib{NiesBook}{book}{
				author = {Nies, Andr{\'e}},
				title = {Computability and randomness},
				series = {Oxford Logic Guides},
				volume = {51},
				publisher = {Oxford University Press},
				place = {Oxford},
				date = {2009},
				pages = {xvi+433},
				isbn = {978-0-19-923076-1},
				review = {\MR{2548883}},
				doi = {10.1093/acprof:oso/9780199230761.001.0001},
			}
		\end{biblist}
	\end{bibdiv}
\end{document}